\newcommand{\eps}{\varepsilon}
\renewcommand{\phi}{\varphi}
\renewcommand{\rho}{\varrho}
\let\setminus=\smallsetminus
\let\emptyset=\varnothing
\declaretheorem[parent=section]{theorem}
\declaretheorem[sibling=theorem]{lemma}
\setlist{itemsep=0.1em, topsep=0.1em, parsep=0.1em, partopsep=0.1em}
\colorlet{RoyalRed}{red!70!black}
\definecolor{RoyalBlue}{rgb}{0.25, 0.41, 0.88}
\definecolor{RoyalAzure}{rgb}{0.0, 0.22, 0.66}
\newlength{\bibitemsep}\setlength{\bibitemsep}{0.5pt}
\newlength{\bibparskip}\setlength{\bibparskip}{0.5pt}
\let\oldthebibliography\thebibliography
\renewcommand\thebibliography[1]{%
  \oldthebibliography{#1}%
  \setlength{\parskip}{\bibitemsep}%
  \setlength{\itemsep}{\bibparskip}%
}
\title{Short proof of the hypergraph container theorem}
\author{
Rajko Nenadov\thanks{School of Computer Science, University of Auckland, New Zealand. Email: \texttt{rajko.nenadov@auckland.ac.nz}. Research supported by the New Zealand Marsden Fund.} \and
Huy Tuan Pham\thanks{Department of Mathematics, Stanford University, Stanford, CA 94305. Email: \texttt{huypham@stanford.edu.} Research supported by a Clay Research Fellowship and a Stanford Science Fellowship.}
}
\date{}
\begin{document}
\maketitle

\begin{abstract}
We present a short and simple proof of the celebrated hypergraph container theorem of Balogh--Morris--Samotij and Saxton--Thomason. On a high level, our argument utilises the idea of iteratively taking vertices of largest degree from an independent set and constructing a hypergraph of lower uniformity which preserves independent sets and inherits edge distribution. The original algorithms for constructing containers also remove in each step vertices of high degree which are not in the independent set. Our modified algorithm postpones this until the end, which surprisingly results in a significantly simplified analysis.
\end{abstract}

\section{Introduction}

The method of containers is a powerful technique in combinatorics used to produce a small number of clusters encompassing independent sets of a given hypergraph. While in some applications one follows the idea of the method and the general principles for building such clusters, quite often one can apply off the shelf tools. The most such applicable tool has been developed independently by Balogh, Morris, and Samorij \cite{balogh15container} and Saxton and Thomason \cite{saxton15containers}, and it is this result that is commonly referred to as the \emph{hypergraph container theorem}. For an introduction to the method, the hypergraph container theorem, and its many suprising applications, we refer the reader the ICM survey \cite{balogh18icm}. 
A number of different proofs and versions of this result have been obtained since \cite{balogh20efficient,bernshteyn19hyp, bfp, morris2018asymmetric,nenadov2023probabilistic, saxton16online, saxton16simple}. We present a simple and short proof of a slight generalisation of the original theorem. Two other short proofs have been obtained very recently by Campos and Samotij \cite{campos24short}.

Let $V$ be a finite set. Given a subset $X \subseteq V$, let $\langle X \rangle = \{S \subseteq V \colon X \subseteq S\}$. We say a probability measure $\nu$ over $2^{V}$ is \emph{$(p, K)$-uniformly-spread} if for every non-empty $X \subseteq V$ we have $\nu(\langle X \rangle) \le K p^{|X|-1} / |V|$. \emph{Uniform} signifies that the measure is fairly uniform from the point of view of elements of $V$. Throughout the paper we use $V = V(\mathcal{H})$ and $N = |V|$, where $\mathcal{H}$ is a given hypergraph. If all edges in a hypergraph $\mathcal{H}$ have size at most $\ell$, we say that $\mathcal{H}$ is an \emph{$(\le \ell)$-graph}.

\begin{theorem} \label{thm:main}
    For every $\ell \in \mathbb{N}$ and $K, \eps > 0$ there exists $T > 0$ such that the following holds. Suppose $\mathcal{H}$ is an $(\le \ell)$-graph, and let $\nu$ be $(p, K)$-uniformly-spread measure over $2^{V}$ supported on $\mathcal{H}$, for some $p \in (0,1]$. Then for every independent set $I \subseteq V(\mathcal{H})$ there exists $F \subseteq I$ and $C = C(F) \subseteq V$ such that $|F| \le T N p$, $\nu(\mathcal{H}[C]) < \eps$, and $I \subseteq C \cup F$.
\end{theorem}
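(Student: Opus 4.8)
The plan is to argue by induction on the uniformity $\ell$. Before the induction, three harmless reductions: rescaling $\nu$ so that $\nu(\mathcal{H})=1$ only improves the constant $K$; if $Tp\ge 1$ for the constant $T$ we will end up producing, then $|I|\le N\le TNp$ and we may take $F=I$, $C=\emptyset$; and we may delete from $\mathcal{H}$ every edge of size at most $1$ together with its vertices, since such a vertex lies in no independent set and can be placed in $V\setminus C$ for every container — so assume all edges have size between $2$ and $\ell$. The base case $\ell\le 1$ is then trivial, with $F=\emptyset$ and $C=V$.

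For the inductive step, $\ell\ge 2$: if $\nu(\mathcal{H})<\eps$ output $F=\emptyset$, $C=V$; otherwise run the following greedy, which is the usual container algorithm with the removal of large-degree vertices lying \emph{outside} $I$ deferred. Maintain a current hypergraph $\mathcal{H}'$ (initially $\mathcal{H}$, always carrying the restricted measure, which is again $(p, K)$-uniformly-spread), a set $F\subseteq I$, a set $R$ disjoint from $I$, and a $(\le \ell-1)$-graph $\mathcal{G}$ with a measure $\mu$. While $\mathcal{H}'$ is still too heavy, take a vertex $v$ of maximum $\nu$-degree in $\mathcal{H}'$; if $v\in I$, add $v$ to $F$ and add to $\mathcal{G}$ the link $\{e\setminus\{v\}:v\in e\in\mathcal{H}'\}$, each $e\setminus\{v\}$ inheriting the weight $\nu(e)$; if $v\notin I$, add $v$ to $R$; in either case delete $v$ and all edges through it from $\mathcal{H}'$. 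Each edge of $\mathcal{H}$ is deleted exactly once (when its first vertex is extracted) and contributes to $\mathcal{G}$ at most once, so $\mathcal{G}$ really is a $(\le \ell-1)$-graph; since $e\setminus\{v\}\supseteq X$ forces $e\supseteq X$, we get $\mu(\langle X\rangle)\le\nu(\langle X\rangle)$, so $\mu$ is again $(p, K)$-uniformly-spread; and $I\setminus F$ is independent in $\mathcal{G}$, for an edge $e\setminus\{v\}\subseteq I\setminus F$ with $v\in F$ would give $e\subseteq I$. Apply the inductive hypothesis to $(\mathcal{G},\mu)$ and $I\setminus F$ to obtain $F'\subseteq I\setminus F$ with $|F'|\le T'Np$ and $C_{\mathcal{G}}=C_{\mathcal{G}}(F')$ with $\mu(\mathcal{G}[C_{\mathcal{G}}])<\eps$ and $I\setminus F\subseteq C_{\mathcal{G}}\cup F'$. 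Output $F^{\star}=F\cup F'$ and $C=(V\setminus(F\cup R))\cap C_{\mathcal{G}}$.

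The correctness checks are routine. If $u\in I\setminus F^{\star}$ then $u\notin F\cup R$ (as $R\cap I=\emptyset$) and $u\in I\setminus F\subseteq C_{\mathcal{G}}\cup F'$ with $u\notin F'$, so $u\in C_{\mathcal{G}}$ and hence $u\in C$; thus $I\subseteq C\cup F^{\star}$. Any edge contained in $C$ avoids $F\cup R$, so it was never deleted and survived into the final $\mathcal{H}'$, whose weight is below $\eps$ by the stopping rule, so $\nu(\mathcal{H}[C])<\eps$. Finally $C=C(F^{\star})$ because the entire run is reconstructible from $F^{\star}$: a vertex is extracted precisely when it is the current maximum-degree vertex, and every vertex of $F'$ survives into the final $\mathcal{H}'$ and so is never extracted, whence ``$v$ is extracted and $v\in F^{\star}$'' is equivalent to ``$v\in F$''; this recovers $F$, $R$, $\mathcal{G}$, then $F'=F^{\star}\setminus F$, then $C_{\mathcal{G}}=C_{\mathcal{G}}(F')$, then $C$.

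The one genuine difficulty is the bound $|F|\le(T-T')Np$, which together with $|F'|\le T'Np$ closes the induction for a suitable $T=T(\ell,K,\eps)$. The crude estimate is hopeless: a maximum-degree vertex in a hypergraph of weight $\ge\eps$ on at most $N$ vertices has degree $\ge 2\eps/N$, so each extraction strips $\ge 2\eps/N$ of the weight and at most $N/(2\eps)$ extractions occur — losing exactly the crucial factor of $p$. The $(p,K)$-spread hypothesis has to be exploited here in the same spirit as for graphs, where $v\in I$ lets one delete its whole neighbourhood, of size $\gtrsim_{\eps,K} 1/p$ because the edges at $v$ are spread; for hypergraphs the neighbourhood cannot be deleted directly (it may meet $I$), and the role of the lower-uniformity graph $\mathcal{G}$ and its recursive container is precisely to certify which vertices near the extracted $I$-vertices may be moved into $R$. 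Making this quantitative — so that each $I$-extraction is charged against a fresh batch of $\gtrsim 1/p$ vertices moved into $R$, with the batches disjoint, and so that the single stopping rule simultaneously forces the final $\mathcal{H}'$ to be light and keeps $\mathcal{G}$ always processed at the fixed threshold $\eps$ rather than a shrinking one — is the technical heart of the argument, and I expect it is exactly the step that the authors' postponement device streamlines.
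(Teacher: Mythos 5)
There is a genuine gap, and you have identified it yourself: you never bound $|F|$, and that bound is the entire content of the theorem (without it one may simply take $F=I$, $C=\emptyset$). Your algorithm extracts the maximum-degree vertex of the \emph{whole} current hypergraph and stops only when the residual weight drops below $\eps$; as you note, this can take $\Theta(N/\eps)$ rounds, all of which might land in $I$, and the recursion into the link hypergraph $\mathcal{G}$ does nothing to repair this --- indeed, as written, $\mathcal{G}$ is vestigial: your container $C\subseteq V\setminus(F\cup R)$ already satisfies $\nu(\mathcal{H}[C])<\eps$ and $I\setminus F\subseteq V\setminus(F\cup R)$ by the stopping rule alone, so the recursive call buys nothing and the missing charging argument (``each $I$-extraction pays for $\gtrsim 1/p$ vertices in $R$'') is a hope rather than a step. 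A secondary but real problem is the spreadness of the link measure: the induction hypothesis needs a \emph{probability} measure, and after normalising $\mu$ by $\mu(\mathcal{G})$ your bound $\mu(\langle X\rangle)\le\nu(\langle X\rangle)\le Kp^{|X|-1}/N$ gives spreadness constant $K/\mu(\mathcal{G})$, which is uncontrolled when the link is light.

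The paper closes exactly these holes by changing the architecture in three ways. First, the greedy step selects the maximum-degree vertex \emph{from $I$ only} and runs for exactly $Np$ rounds, so $|F|\le Np$ per level by fiat; high-degree vertices outside $I$ are never tracked during the loop. Second, there is a dichotomy on the accumulated link weight $\nu(\mathcal{H}')$: if $\nu(\mathcal{H}')\ge\alpha p$ one recurses on the $(\le\ell-1)$-graph of links (the extra factor $p$ in $\nu(\langle X\cup\{v\}\rangle)\le Kp^{|X|}/N$ cancels against the lower bound $\alpha p$ on the normalising constant, which is precisely what rescues the spreadness after normalisation --- together with a degree-capping device $\mathcal{D},\mathcal{L}$ that enforces the upper bound \eqref{eq:upper_bound}); if $\nu(\mathcal{H}')<\alpha p$, then every remaining vertex of $I$ has degree at most $\alpha/N$ in the residual hypergraph, and the set of all such low-degree vertices is a container of size at most $(1-\delta)N$. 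Third, one round of this only shrinks the container by a constant factor, so the theorem is obtained by iterating the resulting container lemma $O(\log(K/\eps))$ times rather than in a single pass. Your reductions, the link construction, the independence of $I\setminus F$ in the link graph, and the reconstructibility of $C$ from the fingerprint are all fine, but the quantitative heart of the proof is absent.
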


If $\nu$ is uniform on $\mathcal{H}$, we obtain original hypergraph container theorems \cite{balogh15container, saxton15containers}. Dependence of $T$ on the uniformity is of order $O(2^{\ell^2})$, which is also along the lines of the original results. Near-optimal dependence was obtained by Balogh and Samotij \cite{balogh20efficient} and Campos and Samotij \cite{campos24short}.

\section{Proof}

Our proof bears resemblance with the proof from \cite{balogh15container,saxton15containers}. On a high level, we choose $F$ in Theorem \ref{thm:main} by greedily taking vertices from $I$ with largest degree with respect to $\nu$ and construct a hypergraph of lower uniformity given by (parts) of hyperedges containing vertices from $F$. A common feature in many of the proofs utilising a similar idea is that one also keeps track of the vertices which are not in $I$ but have larger degree than the last chosen vertex in $F$. The main novelty here is that we completely avoid this, unless we are in the case the resulting hypergraph of lower uniformity is not sufficiently dense to proceed with the induction. In this case, we show that removing vertices of high degree immediately yields a desired container. 
It is worth noting that the proofs from \cite{balogh15container,saxton15containers} also have a similar case distinction, however the analysis in our cases turns out to be significantly simpler.

Theorem \ref{thm:main} follows by iterated application of the following lemma, known as the \emph{hypergraph container} lemma.

\begin{lemma} \label{lemma:main}
    For every $\ell \in \mathbb{N}$ and $K > 0$ there exists $\delta > 0$ such that the following holds. Suppose $\mathcal{H}$ is an $(\le \ell)$-graph, and let $\nu$ be a $(p, K)$-uniformly-spread measure over $2^{V}$ supported on $\mathcal{H}$, for some $p \in (0, 1]$. Then for every independent set $I \subseteq V$ there exists $F \subseteq I$ and $C = C(F) \subseteq V$ such that $|F| \le \ell N p$, $|C| \le (1 - \delta) N$, and $I \subseteq C \cup F$. Moreover, $C$ can be unambigously constructed from any $F \subseteq \hat F \subseteq I$.
\end{lemma}
\begin{proof}
    We prove the lemma by induction on $\ell$. For $\ell = 1$, take $F = \emptyset$ and $C \subseteq V$ to be the set of all vertices $v \in V$ with $\nu(v) = 0$. As there are at least $N / K$ vertices with strictly positive measure, the lemma holds for $\delta = 1 / K$. We now prove the lemma for $\ell \ge 2$. Without loss of generality, we may assume $|I| \ge Np$.

    Set $F = \emptyset \subseteq I$, $\mathcal{L} = \emptyset \subseteq 2^{V}$, and $\mathcal{D}, \mathcal{H}' = \emptyset \subseteq \mathcal{H}$. Repeat the following for $Np$ rounds: Take $v \in I \setminus F$ to be a largest vertex with respect to $\nu(\langle v \rangle \cap \mathcal{R})$, where $\mathcal{R} = \mathcal{H}[V \setminus F] \setminus \mathcal{D}$ (tie-breaking done in some canonical way, e.g.\ by agreeing on the ordering of $V$). Add $v$ to $F$, set $\mathcal{H}' = \mathcal{H}' \cup (\langle v \rangle \cap \mathcal{R})$, and for each $X \in  2^{V} \setminus \mathcal{L}$ of size $|X| \le \ell -1$ such that
    \begin{equation} \label{eq:violated}
        \nu(\langle X \rangle \cap \mathcal{H}') > K p^{|X|} / N,
    \end{equation}
    add $X$ to $\mathcal{L}$ and set $\mathcal{D} = \mathcal{D} \cup (\langle X \rangle \cap \mathcal{R})$. 
    
    A few observations about the process. First, as $\nu$ is $(p, K)$-uniformly-spread the value $\nu(\langle X \rangle \cap \mathcal{H}')$ increases by at most $\nu(\langle X \cup \{v\} \rangle) \le Kp^{|X|} / N$ after adding a vertex $v$ to $F$. Once a subset $X$ satisfies \eqref{eq:violated} no more hyperedges which contain $X$ are added to $\mathcal{H}'$, thus at the end of the process we have
    \begin{equation} \label{eq:upper_bound}
        \nu(\langle X \rangle \cap \mathcal{H}') \le 2 K p^{|X|} / N
    \end{equation}
    for every $X \subseteq V$ of size $|X| \le \ell - 1$. Second, given $F \subseteq \hat F \subseteq I$, we can reconstruct $F$ from $\hat F$ together with the order in which the vertices were added, thus we can also reconstruct $\mathcal{H'}$ and $\mathcal{R}$.

    We next derive several useful lower bounds on $\nu(\mathcal{H}')$. First we show that if $\nu(\mathcal{D})$ is large, then $\nu(\mathcal{H}')$ is also large. In particular, the following holds:
    \begin{align}\label{eq:D-large}
    	\nu(\mathcal{H}') \ge 2^{-\ell} p\nu(\mathcal{D}).
    \end{align}
    Indeed, for each $e \in \mathcal{D}$ there exists $X \in \mathcal{L}$ such that $e \in \langle X \rangle$. Thus, $\sum_{X\in \mathcal{L}} \nu(\langle X\rangle) \ge \nu(\mathcal{D})$. On the other hand, we have by (\ref{eq:violated}) that
    \begin{equation*}
    	\sum_{X\in \mathcal{L}} \nu(\langle X\rangle \cap \mathcal{H}') > \sum_{X\in \mathcal{L}} Kp^{|X|}/N \ge  p \sum_{X\in \mathcal{L}} \nu(\langle X\rangle).
    \end{equation*}
    Here in the last inequality we use that $\nu$ is $(p,K)$-uniformly spread. Furthermore, each edge $e$ in $\mathcal{H}'$ may contribute to at most $2^{\ell}$ terms $\nu(\langle X\rangle \cap \mathcal{H}')$. Hence, 
    \begin{align*}
    	\nu(\mathcal{H}') \ge 2^{-\ell} \sum_{X\in \mathcal{L}} \nu(\langle X\rangle \cap \mathcal{H}') > 2^{-\ell} p\nu(\mathcal{D}),
    \end{align*}
    as claimed in (\ref{eq:D-large}). 
    
    Next, we show that 
    \begin{align}\label{eq:v-upp}
    	\nu(\mathcal{H}') \ge (Np) \max_{v\in I\setminus F} \nu(\langle v \rangle \cap \mathcal{R}). 
    \end{align}
    Let $\mathcal{R}_i$ denote the hypergraph $\mathcal{R}$ at the moment when the $i$-th vertex $v_i$ was added to $F$ (thus $\mathcal{R} = \mathcal{R}_{|F|}$). We observe that, since $\mathcal{R}$ is non-increasing and by our choice of $v$ in each step, 
    \begin{align*}
    	\nu(\mathcal{H}') \ge \sum_{i=1}^{|F|} \nu(\langle v_i\rangle \cap \mathcal{R}_i) \ge  \sum_{i=1}^{|F|} \max_{v\in I\setminus F} \nu(\langle v \rangle \cap \mathcal{R}_{|F|}),
    \end{align*}
    yielding (\ref{eq:v-upp}). 
    
    Let $\alpha = 2^{-\ell-2}$. We now distinguish two cases, where if $\nu(\mathcal{H}')$ is large, then we can apply the inductive hypothesis to an appropriate $(\le \ell-1)$-graph, and otherwise we can immediately find a small container $C$ for which $I\setminus F \subseteq C$.  
    
     \textbf{Case 1: $\nu(\mathcal{H}') \ge \alpha p$.} Let $\mathcal{H}''$ denote the $(\le \ell-1)$-graph consisting of sets $X$ such that $X=H'\setminus F$ for some $H'\in \mathcal{H}'$. Set $\nu'$ to be the probability measure over $2^{V \setminus F}$ given by
    $$
        \nu'(X) \, \propto\,  \begin{cases}
            \nu( (X \cup 2^F)  \cap \mathcal{H}'), &\text{if } X\in \mathcal{H}'', \\
            0, &\text{otherwise.}
        \end{cases}
    $$
    From \eqref{eq:upper_bound} and $\nu(\mathcal{H}') \ge \alpha p$ we conclude that $\nu'$ is $(2K \alpha^{-1}, p)$-uniformly-spread. 
    Also observe that $I$ is an independent set in $\mathcal{H}''$, thus by the induction hypothesis there exists $F' \subseteq V$ of size $|F'| \le (\ell - 1)Np$ and $C = C(F')$ such that $|C| \le (1 - \delta) N$ and $I \subseteq C \cup F'$. Note that we can reconstruct $C$ from $F := F \cup F'$.
    
    \textbf{Case 2: $\nu(\mathcal{H}') < \alpha p$.} By (\ref{eq:D-large}), we have $\nu(\mathcal{D})<1/4$ and hence $\nu(\mathcal{R}) \ge \nu(\mathcal{H})-\nu(\mathcal{H}')-\nu(\mathcal{D}) > 1/2$. By (\ref{eq:v-upp}), for every $v\in I\setminus F$ we have 
    \begin{equation} \label{eq:v_upper}
        \nu(\langle v \rangle \cap \mathcal{R}) \le \alpha / N.
    \end{equation}
    Let now $C \subseteq V \setminus F$ denote the set of all $v \in V \setminus F$ such that $\nu(\langle v \rangle \cap \mathcal{R}) \le \alpha / N$. By \eqref{eq:v_upper} we have $I \setminus F \subseteq C$. Furthermore, 
    \[
        \nu(\mathcal{R}) \le \sum_{v \in C} \nu(\langle v \rangle \cap \mathcal{R}) + \sum_{w \in V \setminus (F \cup C)} \nu(\langle w \rangle \cap \mathcal{R}) < \alpha + (N-|C|) \cdot K/N.
    \]
    Hence, $|C| < N - (\nu(\mathcal{R})-\alpha)N/K < (1-\delta)N$ for $\delta=1/(4K)$. This concludes the construction of desired $F$ and $C$.
\end{proof}

For the sake of completeness, we derive Theorem \ref{thm:main} from Lemma \ref{lemma:main}.

\begin{proof}[Proof of Theorem \ref{thm:main}]
    Let $\delta > 0$ be as given by Lemma \ref{lemma:main} for $\ell$ and $K/\eps$ (as $K$). We prove the theorem for $T = \ell \log(K \eps^{-1}) / \log(1 + \delta)$.
    
    We find a \emph{fingerprint} $F$ and a \emph{container} $C$ as follows. Set $F = \emptyset$ and $C = V$, and as long as $\nu(\mathcal{H}[C]) \ge \eps$ do the following: Let $F'$ and $C'$ be as given by Lemma \ref{lemma:main} applied with $\nu'$ being a probability measure over $2^C$ given by $\nu'(X) \, \propto \, \nu(X)$ if $X \in \mathcal{H}[C]$, and $\nu'(X) = 0$ otherwise. Set $F := F \cup F'$ and $C := C'$, and proceed to the next iteration.

    If $\nu(\mathcal{H}[C]) \ge \eps$, 
    then for nonempty $X\subseteq C$, 
    \[
        \nu'(\langle X\rangle) \le \frac{\nu(\langle X\rangle)}{\nu(\mathcal{H}[C])} \le \frac{Kp^{|X|-1}/N}{\eps} \le \frac{K}{\eps} p^{|X|-1} / |C|,
    \]
    and hence $\nu'$ is $(p, K/\eps)$-uniformly-spread each time we apply Lemma \ref{lemma:main}. Furthermore, if $\nu(\mathcal{H}[C]) \ge \eps$, then $|C| \ge \eps N / K$. In each iteration the set $C$ shrinks by a factor of $1 - \delta$, thus we are done after at most $\log(K \eps^{-1}) / \log(1 + \delta)$ iterations. The set $F$ grows by at most $\ell Np$ in each iteration, which gives an upper bound of $T Np$ on its final size for the above choice of $T = T(K, \eps)$. Due to the last property in Lemma \ref{lemma:main}, the final set $C$ can be unambiguously constructed from $F$.
\end{proof}

\paragraph{Acknowledgment.} Ideas used in this paper were developed while the first author was visiting Stanford University in November 2023. The first author thanks Jacob Fox for hospitality. We thank Jacob Fox for helpful comments and Wojciech Samotij for pointing out a subtle issue in an earlier version.

\bibliographystyle{abbrv}
\bibliography{hypergraph_containers}

\end{document}